\def\Z{{\mathbb Z}}
 \DeclareFontFamily{U}{wncy}{}
\def\ln{{\rm ln}}
\def\ur{{\rm ur}}
\def\Cl{{\rm Cl}}
\def\Disc{{\rm Disc}}
\def\Aut{{\rm Aut}}
\def\F{{\mathbb F}}
\def\Hom{{\rm Hom}}
\def\Q{{\mathbb Q}}
\def\Z{{\mathbb Z}}
\def\F{{\mathbb F}}
\def\Q{{\mathbb Q}}
\newcommand*{\K}{\mathbb{K}}
\DeclareMathOperator{\Sel}{Sel}
\newcommand*{\ra}{\rightarrow}
\newcommand*{\ol}{\overline}
\def\Disc{{\rm Disc}}
\newtheorem{tthm}{Theorem}
\newtheorem{rrem}[tthm]{Remark}
\newtheorem{theorem}{Theorem}[section]
\newtheorem{lemma}[theorem]{Lemma}
\newtheorem{conj}[theorem]{Conjecture}
\newtheorem{remark}[theorem]{Remark}
\newtheorem{proposition}[theorem]{Proposition}
\newenvironment{proof}{\noindent {\bf Proof:}}{$\Box$ \vspace{2 ex}}
\DeclareFontFamily{U}{wncy}{}
 \DeclareFontShape{U}{wncy}{m}{n}{<->wncyr10}{}
\DeclareSymbolFont{mcy}{U}{wncy}{m}{n}
 \DeclareMathSymbol{\Sh}{\mathord}{mcy}{"58}
\author{Arul Shankar and Jacob Tsimerman}
\title{Non-trivial bounds on 2, 3, 4, and 5-torsion in class groups of number fields, conditional on standard $L$-function conjectures}
\begin{document}
\maketitle
\begin{abstract}
We prove new conditional bounds on the $m$-torsion subgroups of class groups of number fields of any fixed degree, for $m=2$, $3$, $4$, and $5$. Our methods first recast the problem in the language of class groups of Galois modules, which allows us to relate these torsion subgroups to Selmer groups of elliptic curves. We then obtain a global estimate using the refined BSD conjecture, in a similar way to how one normally uses the Brauer-Siegel bound. 

Our methods are potentially very general, but rely on the existence of motives with very special $\Z/m\Z$-cohomology. In particular, the restriction to $m\leq 5$ stems from needing an elliptic curve over $\Q$ with $m$-torsion subgroup isomorphic to $\Z/m\Z\oplus\mu_m$.
\end{abstract}

\section{Introduction}

\subsection*{History and main results}

Let $n,m\geq 2$ be fixed integers. For
any number field $K$ of degree $n$, it is conjectured that the size
$h_m(K)$ of the $m$-torsion subgroup of the class group of $K$
satisfies the bound $h_m(K)=O_{m,n}(|\Disc(K)|^\epsilon)$ for
every $\epsilon>0$. This conjecture is currently known only for $n=m=2$
(using Gauss's genus theory).  For other degrees and primes,
the best that is known in general is the bound
\begin{equation}\label{eqBS}
h_m(K)=O_{n,\epsilon}(|\Disc(K)|^{1/2+\epsilon}),
\end{equation}
which follows from the Brauer--Siegel theorem bounding the size $h(K)$
of the entire class group, and applying the trivial inequality
$h_m(K)\leq h(K)$. It has become a problem of much interest to obtain any
improvement on the trivial bound \eqref{eqBS} for any pair $(n,m)\neq
(2,2)$.

The first such result was obtained independently by Pierce
\cite{Pierce} and Helfgott--Venkatesh \cite{HV}, who consider the
case $n=2$ and $m=3$, and prove the bounds
$O_\epsilon(|\Disc(K)|^{27/56+\epsilon})$ and $O_\epsilon(|\Disc(K)|^{0.44178...})$,
respectively. Subsequently, these bounds were improved to
$O_\epsilon(|\Disc(K)|^{1/3+\epsilon})$ by work of Ellenberg and
Venkatesh~\cite{EV}. In this work, they also achieve the bound
$O_\epsilon(|\Disc(K)|^{1/3+\epsilon})$ for the cases $(n,m)=(3,3)$
and $(4,3)$.
Recent work of Bhargava, Taniguchi, Thorne, Zhao and the two authors
\cite{BSTTTZ} obtain power saving bounds $O(|\Disc(K)|^{1/2-1/(2n)+\epsilon})$ for the case $m=2$ for all
degrees $n>2$.  Further unconditional results are also known when the
number fields $K$ are constrained by having a fixed Galois group: in
the work of Wang \cite{Wang1}, Kluners and Wang \cite{KW}, and Wang
\cite{Wang2}, power saving bounds are respectively obtained for
$p$-torsion in the class groups of $K$, where $K$ has Galois group
$(\Z/q\Z)^r$ (here $q$ is prime and $r>1$), where the Galois group of
$K$ is a $p$-group, and where the Galois group of $K$ is a nilpotent
group $G$ such that every Sylow subgroup of $G$ is non-cyclic and
non-quaternion. Additionally, conditional on the generalized Riemann hypothesis, Ellenberg and Venkatesh prove a bound of 
$O(|\Disc(K)|^{\frac12- \frac1{2m(n-1)}+\epsilon})$ for all pairs $(n,m)$.

The purpose of this paper is to introduce a new method for obtaining
such bounds, albeit a conditional one at the moment.  Our main result is the following.
\begin{tthm}\label{thMain5}
Let $n>1$ be a fixed integer, and let $m=2$, $3$, $4$, or $5$.
Assume the Hasse--Weil conjecture, the refined Birch and Swinnerton-Dyer conjecture, and the generalized Riemann hypothesis for $L$-functions of elliptic curves over degree-$n$ number fields. 
Then $h_m(K)=O_{n,\epsilon}(D^{\frac14+\epsilon})$ for any degree-$n$ number field $K$.
\end{tthm}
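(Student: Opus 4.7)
The plan is to fix, once and for all, an elliptic curve $E/\Q$ whose mod-$m$ Galois representation decomposes as $E[m]\cong \mu_m\oplus \Z/m\Z$; such $E$ exist precisely for $m\in\{2,3,4,5\}$ (coming from rational points on suitable modular curves), which is the origin of the stated restriction on $m$. For a degree-$n$ number field $K$, I would base-change $E$ to $K$ and study the $m$-Selmer group $\Sel_m(E/K)\subseteq H^1(G_K,E[m])$, exploiting the splitting
\[
H^1(G_K,E[m]) \;=\; H^1(G_K,\mu_m)\oplus H^1(G_K,\Z/m\Z)
\]
to embed $h_m(K)$ twice. By Kummer theory, the everywhere-unramified subgroup of $H^1(G_K,\mu_m)=K^*/K^{*m}$ sits in the exact sequence
\[
1\to \Oo_K^*/m \to K^*_{(m)}/K^{*m} \to \Cl(K)[m]\to 0,
\]
while the everywhere-unramified subgroup of $H^1(G_K,\Z/m\Z)=\Hom(G_K,\Z/m\Z)$ equals $\Hom(\Cl(K),\Z/m\Z)$, of order $h_m(K)$. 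At each place $v$ of good reduction with $v\nmid m$, the local Selmer condition of $\Sel_m(E/K)$ is $H^1_\ur(K_v,\mu_m)\oplus H^1_\ur(K_v,\Z/m\Z)$, so the product of the two everywhere-unramified summands embeds (up to an $O_{n,E}(1)$-index loss at the $O_n(1)$ bad and ramified places) into $\Sel_m(E/K)$, yielding the crucial lower bound
\[
|\Sel_m(E/K)| \;\gg_{n,m,E}\; h_m(K)^2.
\]

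The upper bound on $|\Sel_m(E/K)|$ comes from the three $L$-function hypotheses. The Selmer exact sequence $0\to E(K)/mE(K)\to \Sel_m(E/K)\to \Sh(E/K)[m]\to 0$ yields $|\Sel_m(E/K)|\leq m^r\cdot |E(K)_{\mathrm{tors}}|\cdot |\Sh(E/K)|$, where $r$ is the Mordell--Weil rank of $E(K)$. Refined BSD, together with Hasse--Weil for analytic continuation and functional equation, expresses the leading Taylor coefficient $L^*(E/K,1):=\lim_{s\to 1}L(E/K,s)(s-1)^{-r}$ as an explicit quotient with numerator $|\Sh(E/K)|\cdot R_E(K)\cdot \prod_v c_v\cdot \Omega(E/K)$ and denominator involving $|E(K)_{\mathrm{tors}}|^2$ and $|\Disc(K)|^{1/2}$. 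Under GRH for $L(E/K,s)$, a standard contour argument applied to the logarithmic derivative bounds $L^*(E/K,1)$ by $(\log|\Disc(K)|)^{O_n(1)}\ll_\epsilon |\Disc(K)|^\epsilon$; the Tamagawa product, archimedean periods, and torsion denominators are controlled by constants depending only on $E$ and $n$, since $E/K$ has good reduction outside $O_n(1)$ places. This produces an analogue of the Brauer--Siegel bound,
\[
|\Sh(E/K)|\cdot R_E(K) \;\ll_{n,\epsilon}\; |\Disc(K)|^{1/2+\epsilon}.
\]

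The final step, and the one I expect to be the principal obstacle, is to convert this Brauer--Siegel-type estimate into a bound on $m^r\cdot |\Sh(E/K)|$, which is what actually controls $|\Sel_m(E/K)|$. This requires a regulator lower bound of the form $R_E(K)\gg_{n,E,\epsilon} m^r/|\Disc(K)|^\epsilon$. A Lang--Silverman-type lower bound on canonical heights only yields $R_E(K)\geq c^r$ for some constant $c=c(E,n)>0$ that need not exceed $m$, so one must do something more. My plan here would be to exploit that $E$ is \emph{fixed} while $K$ varies with bounded degree $n$, leveraging a counting/packing argument on points of small canonical height in $E(K)$, or alternatively using the Cassels--Tate pairing to convert $|\Sh|$ into $|\Sh|^{1/2}$ on the $m$-torsion part and absorb the $m^r$-factor that way. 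A secondary, much milder, issue is the bookkeeping of local Selmer conditions at bad places (where $L_v$ may not decompose along the $\mu_m\oplus\Z/m\Z$ splitting); since only $O_n(1)$ such places occur and each contributes a bounded correction, they affect only the implicit constants. Granting the regulator step, combining with the first paragraph's lower bound gives $h_m(K)^2 \ll_{n,\epsilon} |\Disc(K)|^{1/2+\epsilon}$, and hence $h_m(K)\ll_{n,\epsilon} |\Disc(K)|^{1/4+\epsilon}$, as desired.
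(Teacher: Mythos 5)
Your overall architecture is exactly the paper's: fix $E/\Q$ with $E[m]\cong\mu_m\oplus\Z/m\Z$ (possible precisely for $m\le 5$), identify the everywhere-unramified classes in $H^1(K,\mu_m)\oplus H^1(K,\Z/m\Z)$ with (roughly) $\Cl(K)[m]^2$, compare them with $\Sel_m(E/K)$ via Cassels' local solubility-equals-unramified lemma at good places away from $m$ with an $O_{n,E}(1)$ loss at the bad places, and then bound the Selmer group through refined BSD, Hasse--Weil and GRH. Up to that point your argument matches the paper's Propositions on $\Cl(\mu_m)$, $\Cl(\Z/m\Z)$ and $\Sel_m(E_K)$ and its BSD/GRH estimate for $\#\Sh_{E/K}$.

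The genuine gap is the step you yourself flag as ``the principal obstacle,'' and the resolution is not a new regulator mechanism, a packing argument, or a Cassels--Tate halving trick (the latter cannot absorb the $m^{r}$ factor at all, since it only relates $\Sh[m]$ to its own dual): it is a \emph{rank bound that you already have for free from your hypotheses}. Under HWC, BSD and GRH for $L(E/K,s)$, the explicit-formula argument (Iwaniec--Kowalski, Proposition 5.21) gives $r_{E/K}\ll_{n,E}\log|\Disc(K)|/\log\log|\Disc(K)|$. This single estimate does both jobs you were missing. First, it kills the $m^{r}$ factor directly: $m^{r_{E/K}}=O_\epsilon(|\Disc(K)|^{\epsilon})$, so you never need a regulator lower bound as strong as $R_{E/K}\gg m^{r}|\Disc(K)|^{-\epsilon}$. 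Second, it makes a weak height lower bound sufficient for the regulator: since $E$ is fixed and $[K:\Q]=n$ is bounded, nontorsion points of $E(K)$ have canonical height bounded below (Northcott, or the paper's sharper observation that a point generating a subfield $F\subseteq K$ has $\hat h\gg_E\log|\Disc(F)|$), and Minkowski's second theorem then gives $R_{E/K}\ge c(E,n)^{\,r_{E/K}}\cdot r_{E/K}^{-O(r_{E/K})}\gg_\epsilon|\Disc(K)|^{-\epsilon}$ once $r_{E/K}=O(\log|\Disc(K)|/\log\log|\Disc(K)|)$; the paper's version of this is the bound $R_{E/K}\gg_{n,E}(\log|\Disc(K)|/r_{E/K})^{r_{E/K}}$. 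The same rank bound is also what legitimizes your contour estimate for the leading Taylor coefficient $L^{*}(E/K,1)$ (you need $r!\,\theta^{-r}=|\Disc(K)|^{o(1)}$; and note GRH gives $L^{*}\ll_\epsilon|\Disc(K)|^{\epsilon}$, not the polylogarithmic bound you asserted, but that is all you need). With these ingredients one gets $\#\Sh_{E/K}\ll_{n,E,\epsilon}|\Disc(K)|^{1/2+\epsilon}$ and $\#\bigl(E(K)/mE(K)\bigr)=O_\epsilon(|\Disc(K)|^{\epsilon})$, hence $\#\Sel_m(E/K)\ll|\Disc(K)|^{1/2+\epsilon}$, and your lower bound $\#\Sel_m(E/K)\gg h_m(K)^2$ finishes the proof. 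So the proof is completable along your lines, but as written it is incomplete: you left the decisive step open and the alternatives you sketch either fail (Cassels--Tate) or are unnecessary detours (packing arguments), when the conditional rank bound from GRH is the intended and sufficient tool.
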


In the case of quadratic fields, we can get a non-trivial result without assuming either the Hasse--Weil conjceture or the generalized Riemann hypothesis:

\begin{tthm}\label{thm: mainquadratic}
    Let $m=4$ or $5$.
Assume the refined Birch and Swinnerton-Dyer conjecture for elliptic curves over $\Q$. Then $h_m(\Q(\sqrt{D}))=O_{\epsilon}(D^{\frac12-\frac{\delta}2+\epsilon})$ where $\frac12-\delta$ is the best subconvex bound we have for $L$-functions of elliptic curves over $\Q$.\footnote{The best such bound the authors are aware of is $\delta=\frac{25}{256}$ (see \cite{Maga}).}
\end{tthm}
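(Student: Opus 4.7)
The plan is to specialize the strategy behind Theorem \ref{thMain5} to $n = 2$, where modularity of elliptic curves over $\Q$ provides the analytic continuation of $L(E^D, s)$ unconditionally and where GRH at the central point can be replaced by the known subconvex bound. Fix once and for all an elliptic curve $E/\Q$ with $E[m] \cong \Z/m\Z \oplus \mu_m$ as a $G_\Q$-module (such an $E$ exists over $\Q$ for $m = 4$ and $m = 5$). For a fundamental discriminant $D$, let $K = \Q(\sqrt{D})$ and let $E^D/\Q$ denote the quadratic twist by $\chi_D$; then $E \cong E^D$ over $K$, and $E[m]|_{G_K} \cong \Z/m\Z \oplus \mu_m$.

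The algebraic core of the argument is to inject a group of order at least $|\Cl(K)[m]|^2 \cdot D^{-\epsilon}$ into $\Sel_m(E^D/\Q)$. Inflation--restriction along $K/\Q$ identifies $\Sel_m(E^D/\Q)$, up to a factor of $D^\epsilon$ coming from $\Sel_m(E/\Q)$ and local terms, with the $(-1)$-eigenspace of $\Sel_m(E/K)$ under $\Gal(K/\Q)$. Under the splitting $E[m]|_{G_K} \cong \Z/m\Z \oplus \mu_m$ one has
\[H^1(K, E[m]) \cong H^1(K, \Z/m\Z) \oplus H^1(K, \mu_m),\]
and each summand captures a copy of $\Cl(K)[m]$: the first via class field theory, since $\Hom(\Cl(K), \Z/m\Z) \hookrightarrow \Hom(G_K^{\mathrm{ab}}, \Z/m\Z) = H^1(K, \Z/m\Z)$; the second via Kummer theory, since $H^1_{\mathrm{ur}}(K, \mu_m)$ surjects onto $\Cl(K)[m]$ modulo units. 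Cutting down to Selmer-compatible classes at the finitely many bad primes costs at most $D^\epsilon$, yielding the squared contribution.

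The analytic step bounds $|\Sel_m(E^D/\Q)|$ using the refined BSD conjecture together with the subconvex hypothesis. From the descent sequence
\[0 \to E^D(\Q)/m \to \Sel_m(E^D/\Q) \to \Sh(E^D)[m] \to 0\]
one has $|\Sel_m(E^D/\Q)| \leq m^r \cdot |E^D(\Q)_{\mathrm{tors}}/m| \cdot |\Sh(E^D)[m]|$, where $r$ is the Mordell--Weil rank. Refined BSD gives
\[|\Sh(E^D)| \cdot R_{E^D} = \frac{L^{(r)}(E^D, 1)/r! \cdot |E^D(\Q)_{\mathrm{tors}}|^2}{\Omega_{E^D} \cdot \prod_p c_p(E^D)},\]
with $\Omega_{E^D} \asymp D^{-1/2}$ and both the torsion and Tamagawa factors being $\ll_\epsilon D^\epsilon$. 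The subconvex bound $|L(E^D, 1)| \ll_\epsilon D^{1/2 - \delta + \epsilon}$ then yields $|\Sh(E^D)| \ll_\epsilon D^{1 - \delta + \epsilon}$ in the rank-zero case, and the rank contribution $m^r$ is absorbed using Silverman-type lower bounds on $R_{E^D}$ (so that the left side grows at least exponentially in $r$) together with the trivial bound $r \ll \log D$.

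Combining the two halves gives $|\Cl(K)[m]|^2 \ll_\epsilon D^{1 - \delta + \epsilon}$, hence the claimed bound $h_m(K) \ll_\epsilon D^{1/2 - \delta/2 + \epsilon}$. The main obstacle I anticipate is the algebraic step: realizing both copies of $\Cl(K)[m]$ inside $\Sel_m(E^D/\Q)$ requires a careful matching of local Selmer conditions under the Poitou--Tate pairing at primes dividing $m$, $D$, and the conductor of $E$, along with bookkeeping for units, Tamagawa numbers, and the plus eigenspace $\Sel_m(E/K)^{+}$. A secondary difficulty, not present in the GRH-assuming Theorem \ref{thMain5}, is controlling positive-rank twists without logarithmic-strength bounds on $r$; this likely requires either a root-number argument to isolate a positive-density rank-zero subfamily, or a direct subconvex-type estimate on higher derivatives $L^{(r)}(E^D, 1)$ via the functional equation.
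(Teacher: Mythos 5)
Your overall architecture (relate $\Cl(K)[m]^2$ to an $m$-Selmer group via $E[m]\cong\Z/m\Z\oplus\mu_m$, then bound the Selmer group through refined BSD plus subconvexity, with the period of the twist contributing the $D^{-1/2}$) matches the paper, and routing through $\Sel_m(E^D/\Q)$ and the $(-1)$-eigenspace of $\Sel_m(E/K)$ is a workable variant of the paper's choice to work with $E/K$ directly. However, there is a genuine gap exactly at the point you flag as a ``secondary difficulty'': the positive-rank twists. Neither of your proposed remedies works. A root-number argument isolating a rank-zero subfamily of positive density is useless here, because the theorem is a pointwise bound that must hold for \emph{every} $D$, not for a density-one or positive-density set of $D$. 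And a ``direct subconvex-type estimate on higher derivatives'' is not an available tool; the only way derivatives get bounded in this argument is Cauchy's formula on a small circle (as in Lemma \ref{lem:Lsubconvex}), which inflates the bound by a factor like $\theta^{-r}$, so it only stays within $D^{\epsilon}$ if $r=o(\log D)$ --- the trivial bound $r\ll\log D$ is strictly insufficient. The same insufficiency appears twice more: the factor $\#\bigl(E^D(\Q)/mE^D(\Q)\bigr)\geq m^{r}$ separating $\Sel_m$ from $\Sh[m]$ is a positive power of $D$ when $r\asymp\log D$ (e.g.\ $4^{\log D}=D^{\log 4}$), and your hope that a Silverman-type regulator bound ``grows exponentially in $r$'' and absorbs $m^{r}$ would require a uniform height lower bound for the varying twists with base exceeding $m$, which you neither have nor can get unconditionally for the relevant curves (their $j$-invariants need not be integral, and Lang-type bounds for varying twists are exactly the kind of input the paper's Remark 3.3-style caveat warns is unavailable in the needed direction).

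The paper closes this gap with an unconditional rank bound that your proposal is missing: since $\Q(E^D[2])=\Q(E[2])$ is a \emph{fixed} field independent of $D$, the results of Brumer--Kramer and Mazur bound the $2$-Selmer rank of the twist by $\dim_{\F_2}\Sel_2(E^D)\ll \dim_{\F_2}\Sel_2(E)+O(\omega(D))$, where $\omega(D)$ is the number of prime factors of $D$; hence $r_{E^D/\Q}\ll \omega(D)\ll \log D/\log\log D$ unconditionally. This is precisely the ``logarithmic-strength bound on $r$'' you say you lack, and it is what lets the paper run the same three steps as in the GRH case (derivative bound via a shrinking contour, $m^{r}\ll D^{\epsilon}$, regulator $\gg 1$) without assuming GRH. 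Without this $2$-descent input (or some substitute giving $r=o(\log D)$ for all $D$), your argument does not yield the claimed exponent $\tfrac12-\tfrac{\delta}{2}+\epsilon$.
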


\begin{rrem}{\rm 
We make the following three observations regarding Theorem \ref{thMain5} and its proof.
\begin{itemize}
\item[{\rm (a)}] To our knowledge, this is the first (even conditional) result which gives pointwise bounds, where the quality of the exponent does not degrade with the degree $n$ of the number field. 

\item[{\rm (b)}] To obtain the conclusion of Theorem \ref{thMain5}, we need to assume something weaker than the three conjectures for all elliptic curves $E/K$. In fact, we only use base changes of a single elliptic curve.

\item[{\rm (c)}] If, for a fixed elliptic curve $E/\Q$, we had a way of bounding the rank of $E/K$ by $o(\log|\Disc(K)|)$ unconditionally, we would not need to assume the generalized Riemann Hypothesis for the $L$-function of $E/K$. Assuming the Lindel\"{o}f conjecture for this $L$-function would be sufficient. Moreover, any subconvex estimate on the central value of this $L$-function would also imply a power saving bound on the torsion in the class group.
\end{itemize}

}\end{rrem}

\subsection*{Method of proof}

Our perspective is as follows.
\begin{enumerate}
    \item  First, for an integer $m$, we express the group
    $\Cl(K)[m]$ as an unramified cohomology group of a finite
    Galois module $M$.  In fact, $M$ is simply $\Z/m\Z[\Hom(K,\bar{\Q})]$. We denote this unramified cohomology group by $\Cl(M)$.
    \item Our main idea (philosophically, and somewhat imprecisely) is to find other `motives' $X$ whose mod $m$-reduction is (essentially) isomorphic to a power of $M$. Then we may hope to reinterpret $\Cl(M)$ as the $m$-torsion of a motivic class group $Cl(X)$. There should be a global class number formula for $Cl(X)$ with a trivial bound that one can use to bound $Cl(M).$ In a sense, we are obtaining better ``trivial bounds'' for $\Cl(M)$ by embedding $M$ into different motives.
    \item Concretely, we find an elliptic curve $E$ over $\Q$ such that $E[m]$ is isomorphic to $\mu_m\oplus(\Z/m\Z)$ as Galois modules over $\Q$. It is known that such elliptic curves only exist when $m$ is $2$, $3$, $4$, and $5$, hence our results are limited to these four values of $m$. Our motive is essentially the Abelian variety $A=\textrm{Res}_{K/\Q} E_K$ and its class group corresponds to the Tate-Shafarevich group. The torsion group scheme $A[n]$ is isomorphic to $M\oplus M\otimes \mu_m$, and we show that $\Cl(M)$ and $\Cl(M\otimes\mu_m)$ are approximately the same size (in fact they are dual to each other up to small error).
    
    \item The Refined BSD conjecture plays the role of the class number formula which allows us a method for computing the size of $\Sh_A$, assuming we have control on the rank of $A$ and the size of the central value of the $L$-function. This is where the generalized Riemann hypothesis enters. We then use GRH to provide an upper bound for $\Sh_A$. This in turn provides a trivial upper bound for $\Sh_A[m]$,  which we show is essentially the same size as $\Cl(A[m])$ (once the size of the rank has been controlled). Combining these observations gives an upper bound for $\Cl(M)$.
\end{enumerate}

\noindent In fact, we find it easier to work with $E/K$ directly and not form the Weil-restriction down to $\Q$, but this is a minor point done for technical convenience.

\subsection*{Acknowledgements}
We are happy to thank  Ashay Burungale, Jordan Ellenberg, Ananth Shankar, Joe Silverman, and Tonghai Yang for interesting comments and corrections on a previous version of this paper.

\section{Estimating the terms in the refined BSD conjecture}

Let $E$ be an elliptic curve over $\Q$, and let $K$ be a degree-$n$ number field. Let $L(E/K,s)$ denote the Hasse--Weil $L$-function of $E$ over $K$, normalized so that the critical line is $\Re(s)=1$. We denote the completed $L$-function of $E/K$ by $\Lambda(E/K,s)$:
\begin{equation*}
\Lambda(E/K,s):={\rm Norm}_{K/\Q}(N_{E/K})^s|\Disc(K)|^{2s}\Gamma_K(s)L(E/K,s),
\end{equation*}
where $N_{E/K}$ is the conductor ideal of $E$ over $K$ and $\Gamma_K(s):=((2\pi)^{-s}\Gamma(s))^n$.
We begin by recalling three standard conjectures regarding $E/K$ and its $L$-function.
First, the Hasse--Weil conjecture (HWC) for $E/K$ is the following:
\begin{conj}\label{conj:HW}
The function $L(E/K,s)$ has an analytic continuation to the entire complex plane, and satisfies the function equation
\begin{equation*}
\Lambda(E/K,s)=w(E/K)\Lambda(E/K,2-s),
\end{equation*}
where $w(E/K)$ denotes the root number of $E/K$.
\end{conj}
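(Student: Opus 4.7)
The plan is to establish the analytic continuation and functional equation through modularity: I would try to produce a cuspidal automorphic representation $\pi$ of $\GL_2(\A_K)$ whose standard $L$-function agrees with $L(E/K,s)$, and then read off the desired properties from the (already known) analytic theory of automorphic $L$-functions on $\GL_2$. Concretely, if one matches the Frobenius eigenvalues of the $\ell$-adic Tate module of $E$ with the Hecke eigenvalues of $\pi$ at almost all unramified places, then strong multiplicity one forces $L(E/K,s)=L(\pi,s)$, and the completed function $\Lambda(E/K,s)$ inherits its meromorphic continuation and functional equation from the Godement--Jacquet theory, with root number $w(E/K)$ equal to the global $\varepsilon$-factor of $\pi$.

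For $K=\Q$ this program is already complete, via the modularity theorem of Wiles, Taylor--Wiles, and Breuil--Conrad--Diamond--Taylor. When $K$ is totally real I would invoke the potential modularity results of Taylor (with many later refinements by Freitas--Le Hung--Siksek and others) to obtain the necessary meromorphic continuation and functional equation. Over CM fields, the recent ten-author work of Allen--Calegari--Caraiani--Gee--Helm--Le Hung--Newton--Scholze--Taylor--Thorne establishes potential automorphy for $E/K$, which again suffices for Conjecture~\ref{conj:HW}. For a general number field $K$ the statement remains open, and a complete proof would have to extend these cases.

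The main obstacle is automorphy lifting in the absence of a suitable polarization. When $K$ is neither totally real nor CM, the $\ell$-adic representation attached to $E/K$ is not essentially self-dual in a way compatible with the current Taylor--Wiles patching framework, so the standard lifting theorems do not apply directly. One natural fallback would be to realize $E/K$ as a base change from a totally real or CM subfield $K_0\subset K$ and descend the automorphy via Langlands functoriality, but a generic $K$ simply has no such $K_0$. So the hard step is genuinely new: one needs an automorphy lifting theorem in a non-polarizable setting, which at present is the central obstruction throughout the Langlands program for $\GL_n$ over arbitrary number fields.
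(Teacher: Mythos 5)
The statement you were asked about is not a theorem of the paper at all: it is Conjecture~\ref{conj:HW}, the Hasse--Weil conjecture, which the authors simply \emph{assume} as a hypothesis in Theorem~\ref{thMain5} (together with refined BSD and GRH). The paper contains no proof of it, and none is expected, since for a general number field $K$ the statement is open --- exactly as you say in your last paragraph. So your write-up cannot be judged as a proof, and to your credit you do not present it as one: it is an accurate survey of the known cases (modularity over $\Q$ via Wiles, Taylor--Wiles, Breuil--Conrad--Diamond--Taylor; potential modularity over totally real fields; the ten-author theorem over CM fields) together with a correct identification of the genuine obstruction in the non-polarizable setting. That is the right assessment of the state of the art, but it leaves the conjecture unproved for general $K$, which is precisely why the paper treats it as a hypothesis rather than a lemma.

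One point of contact with the paper worth noting: the only place the authors actually \emph{establish} an instance of Conjecture~\ref{conj:HW} is in the proof of Theorem~\ref{thm: mainquadratic}, where $K=\Q(\sqrt{D})$ is quadratic. There they observe that the Weil restriction of $E_K$ to $\Q$ is $E\oplus E_D$, so $L(E/K,s)=L(E/\Q,s)L(E_D/\Q,s)$ and HWC for $E_K$ follows from modularity of $E$ and its twist $E_D$ over $\Q$ --- which is exactly the first (known) case of the strategy you outline. If you want your proposal to align with what the paper actually does, you should frame it that way: modularity gives the conjecture unconditionally only in special situations (e.g.\ $K$ quadratic, or more generally when the relevant base-change/potential-automorphy machinery applies), and outside those situations the conjecture must be assumed, not derived.
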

Denote the algebraic rank of $E/K$ by $r_{E/K}$. The refined Birch and Swinnerton-Dyer (BSD) conjecture for $E/K$ is the following:
\begin{conj}\label{conj:BSD}
The algebraic rank $r_{E/K}$ is equal to the order of vanishing of $L(E/K,s)$ at $s=1$. Moreover, the leading term of the Taylor series of $L(E/K,s)$ at $s=1$ is given by
\begin{equation}\label{eq:BSD}
\lim_{s\to 1}\frac{L(E/K,s)}{(s-1)^{r_{E/K}}}
=
|\Disc(K)|^{-1/2}\cdot\#\Sh_{E/K}
\cdot R_{E/K}\cdot\frac{\Omega_{E/K}C_{E/K}}{\#E(K)_{{\rm tor}}^2}.
\end{equation}
Above, $\Disc(K)$ denotes the discriminant of $K$, $\Sh_{E/K}$ is the Tate--Shafarevich group of $E$ over $K$,  $R_{E/K}$ is the regulator of $E$ over $K$, $\Omega_{E,K}:=\Omega_+^{r_1}\Omega_-^{r_2}$ is the product of the two periods of $E$ raised to the number of real and complex embeddings of $K$, respectively, and $C_{E/K}$ is given by
\begin{equation*}
C_{E/K}:=\prod_v c_v(E/K)
\Big|
\frac{\Delta_{E,v,\min}}{\Delta_{E,v}} 
\Big|_v^{\frac1{12}},
\end{equation*}
where the product is over all finite places $v$ of $K$, and where $\Delta_{E,v,\min}$ denotes the discriminant of a minimal model for $E$ over $K_v$. 
\end{conj}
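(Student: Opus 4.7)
The final displayed statement is Conjecture \ref{conj:BSD}, the refined Birch and Swinnerton--Dyer conjecture for an elliptic curve over a number field. This is one of the Clay Millennium Problems, is wide open in general, and the paper introduces it only as a working hypothesis; so there is no honest proof plan to give, and what follows is a summary of the state of the art and the only presently plausible line of attack, rather than a program one could actually carry out.

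Over $\Q$, the rank half of BSD is a theorem in analytic rank at most one, via Gross--Zagier heights of Heegner points, Kolyvagin's Euler system argument, and the modularity theorem. The $p$-part of the refined leading-term formula is known in the same ranks for many primes $p$ (good ordinary, $p\geq 5$), by combining Skinner--Urban's proof of one divisibility in the Iwasawa main conjecture for $\mathrm{GL}_2/\Q$ with Kato's reciprocity law. Over a general number field $K$ essentially nothing is known outside base-change and CM settings. If one had to outline the only strategy currently on the table, it would proceed in three stages: first prove the parity conjecture for $E/K$ via local root-number and Poitou--Tate computations in the Dokchitser--Dokchitser style; second, prove both divisibilities of a suitable main conjecture over a $\Z_p^d$-extension of $K$, one side by an Euler system or Heegner-cycle construction and the other by an Eisenstein congruence argument; third, recover the archimedean leading coefficient from the interpolation property of the $p$-adic $L$-function for a dense set of primes $p$, and compare against Bloch--Kato.

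The main obstacle is essentially every step. General $K$ lacks both a complete theory of Heegner points and the automorphy and non-vanishing inputs needed for the Eisenstein congruence side; a uniform handle on the Tamagawa product $C_{E/K}$ at wildly ramified primes is missing; and the transcendental period--regulator ratio is in practice only defined by the formula one is trying to prove. In short, proving the statement is a problem of a completely different magnitude from the theorems the paper actually aims at, which is precisely why it is assumed and not established here. The contribution of the present paper is not to attack this conjecture but to show that, granted it (together with the Hasse--Weil conjecture and GRH for a \emph{single} base-changed elliptic curve), one obtains surprisingly strong and uniform Diophantine bounds on $m$-torsion in class groups.
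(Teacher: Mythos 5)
You correctly identify that this statement is the refined Birch and Swinnerton--Dyer conjecture, which the paper states only as a hypothesis (Conjecture \ref{conj:BSD}) and never attempts to prove; the paper contains no argument for it, so there is nothing to compare beyond your accurate assessment of its status. Your summary of what is known and why no proof is expected here is consistent with how the paper uses the conjecture.
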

Finally, we have the Grand Riemann Hypothesis (GRH) for the $L$-function of $E/K$.
\begin{conj}\label{conj:GRH}
The nontrivial zeroes of $L(E/K,s)$ lie on the line $\Re(s)=1$.
\end{conj}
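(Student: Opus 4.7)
The statement in question is the Grand Riemann Hypothesis for $L(E/K,s)$, a standard and deep open conjecture that the authors invoke only as an assumption in Theorem~\ref{thMain5}, not as something they prove. What follows is therefore a description of what an attack on it would have to achieve and where known techniques fall short, rather than a strategy I expect to succeed.

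The natural first step is to place $L(E/K,s)$ inside the class of automorphic $L$-functions. Granting the Hasse--Weil conjecture (Conjecture~\ref{conj:HW}) already supplies analytic continuation and the functional equation, but for GRH one wants the stronger input of a cuspidal automorphic representation $\pi$ on $\GL_2(\mathbb{A}_K)$ with $L(E/K,s) = L(\pi,s-1/2)$. This requires a base change of the (conjecturally existing) modular form attached to $E/\Q$ along $K/\Q$, and is known unconditionally only in restricted settings: solvable $K/\Q$ via Arthur--Clozel, and various CM and potential-modularity cases. A genuine attack on the statement as written would first have to establish this modular realization in full generality.

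Given such a $\pi$, one would then need to locate all nontrivial zeros on $\Re(s)=1$. The standard analytic toolbox --- the explicit formula, Rankin--Selberg convolutions, the Jacquet--Shalika nonvanishing result on $\Re(s)=1$, and Deligne-type bounds on the local Satake parameters --- delivers at best zero-free regions of Vinogradov--Korobov quality (and, conditional on Ramanujan, a modest sharpening). None of these tools is within reach of confining the zeros to a single vertical line. A genuinely new ingredient appears indispensable: for instance, a Hilbert--P\'olya style realization of $L(E/K,s)$ as the characteristic polynomial of a self-adjoint or unitary operator on some natural Hilbert space, or an Arakelov-theoretic analogue of Deligne's cohomological proof of the Riemann hypothesis in the function-field setting.

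The main --- and, with present technology, essentially insurmountable --- obstacle is simply that this statement is a special case of GRH for $\GL_2$ automorphic $L$-functions, and no existing approach comes close to settling it. This is precisely why the authors invoke it as a hypothesis, and why their Remark~(c) explicitly notes that substantially weaker substitutes (the Lindel\"of hypothesis, or even any subconvex bound on the central value) would already be sufficient for the class-group application that motivates the conjecture here.
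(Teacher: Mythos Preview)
Your reading is correct: the statement is a \emph{conjecture} (the Grand Riemann Hypothesis for $L(E/K,s)$), and the paper does not attempt to prove it. It is stated solely as a hypothesis to be assumed in Theorem~\ref{thMain5} and Proposition~\ref{thmsha}. There is therefore no proof in the paper to compare your proposal against, and your discussion --- that this is a deep open problem well beyond current technology, invoked here only as a standing assumption --- is exactly the right assessment.
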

In the remainder of the section, we obtain bounds on the rank $r_{E/K}$ of $E/K$ and the size of the Tate--Shafarevich group $\Sh_{E/K}$ conditional on 
the above three conjectures on $E/K$.
Our bounds are in the setting where $E$ and the degree $n$ of the number field $K$ are fixed, while $K$ is varying.

\medskip

We begin with the following lemma.
\begin{lemma}\label{lem:junkbound}
We have
\begin{equation*}
1\ll_{n,E} \frac{\Omega_{E/K}C_{E/K}}{\#E(K)_{{\rm tor}}^2}\ll_{n,E,\epsilon} 1.
\end{equation*}
\end{lemma}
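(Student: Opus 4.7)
The plan is to bound each of the three factors $\Omega_{E/K}$, $C_{E/K}$, and $\#E(K)_{\mathrm{tor}}$ individually, and to show that each lies between two positive constants depending only on $E$ and $n$. The ratio then also lies between two such constants, giving both inequalities of the lemma.

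The period factor $\Omega_{E/K}=\Omega_+(E)^{r_1}\Omega_-(E)^{r_2}$ depends only on $E$ and on the signature $(r_1,r_2)$ of $K$, which satisfies $r_1+2r_2=n$. Since only finitely many signatures are possible and the real and complex periods of $E$ are fixed positive real numbers, $\Omega_{E/K}$ is pinched between two positive constants depending on $n$ and $E$. For $\#E(K)_{\mathrm{tor}}$ one can invoke Merel's theorem, which bounds the torsion on an elliptic curve over a degree-$n$ number field purely in terms of $n$; more elementarily, one fixes a rational prime $p\ge 5$ of good reduction for $E/\Q$ and uses the injection of prime-to-$p$ torsion $E(K)_{\mathrm{tor}}\hookrightarrow\prod_{v\mid p}E(\kappa_v)$, giving the required bound in terms of $E$ and $n$. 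Trivially $\#E(K)_{\mathrm{tor}}\geq 1$.

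For $C_{E/K}$, start with a global minimal Weierstrass model of $E$ over $\Z$ and base-change it to $\Oo_K$. At any finite place $v$ of $K$ lying above a rational prime $p$ of good reduction for $E/\Q$, the base-changed model remains minimal locally (its discriminant has $v$-valuation zero), so $c_v(E/K)=1$ and $|\Delta_{E,v,\min}/\Delta_{E,v}|_v^{1/12}=1$. The remaining finite places lie above the finitely many primes dividing $\Delta_{E/\Q}$, and their number is at most $n\cdot\omega(\Delta_{E/\Q})$, which is bounded in terms of $E$ and $n$. At each such $v\mid p$, the Tamagawa number $c_v(E/K)$ is bounded uniformly: for multiplicative Kodaira type $I_N$ on $E/\Q_p$, base change of ramification index $e\le n$ produces type $I_{eN}$ with $c_v\le eN\le n\cdot v_p(\Delta_{E/\Q})$, while for the additive Kodaira types $c_v$ is bounded by a universal constant (at most $4$). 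Likewise, since $v(\Delta_{E,v})-v(\Delta_{E,v,\min})\le v(\Delta_{E,v})=e(v\mid p)\cdot v_p(\Delta_{E/\Q})$, the local discriminant ratio is bounded in terms of $E$ and $n$. Each local factor is $\ge 1$ and the product over the bounded number of bad places is $\ll_{n,E} 1$, so $1\le C_{E/K}\ll_{n,E}1$.

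I do not foresee a serious obstacle; the lemma is a bookkeeping exercise combining uniform bounds on each local invariant. The only step requiring any care is verifying the Tamagawa-number bound uniformly under base change, but this follows from Tate's algorithm and the classification of Kodaira types; the slack $\epsilon$ allowed in the upper bound leaves more than enough room even for a lazy application of such estimates.
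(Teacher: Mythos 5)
Your proof is correct and follows essentially the same route as the paper: the period via the finitely many possible signatures, the torsion via Merel, and the Tamagawa numbers and discriminant ratios bounded place-by-place above the finitely many bad primes (the paper simply cites \cite[Lemma 6.2.1]{PPVW} for this last part rather than running through Tate's algorithm explicitly). One small slip: an additive type over $\Q_p$ with nonintegral $j$-invariant can become (split) multiplicative over $K_v$, so $c_v$ need not be $\le 4$ in that case; it is, however, still at most $-e\,v_p(j_E)\ll_{E,n}1$ by the same estimate you use for the multiplicative types, so the conclusion is unaffected.
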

\begin{proof}
The quantity $\Omega_{E/K}$ depends only on $E$ and the number of real and complex embeddings of $K$. By Merel's generalization \cite{Merel} of Mazur's theorem \cite{MazurTor}, it follows that we have 
$\#E(K)_{{\rm tor}}\ll_n 1$. We also have the inequalities
\begin{equation*}
1\leq\prod_{v} \Big |\frac{\Delta_{E,v,\min}}{\Delta_E}\Big |_v\ll_{n,E}1,\quad\quad
1\leq \prod_{v} c_v(E/K)\ll_{n,E} 1,
\end{equation*}
where the first two inequalities are immediate, and the second two follow from an argument identical to 
the proof of \cite[Lemma 6.2.1]{PPVW}.
\end{proof}

Next, we we have the following conditional bounds on the rank of $E/K$ from \cite[Proposition 5.21]{IK}.
\begin{lemma}\label{lem:rankbound}
Assume that $E/K$ satisfies the HWC and the BSD conjecture, and that $L(E/K,s)$ satisfies GRH. Then we have
\begin{equation*}
r_{E/K}\ll_{n,E} \frac{\log |\Disc(K)|}{\log \log |\Disc(K)|}.
\end{equation*}
\end{lemma}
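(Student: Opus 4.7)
The plan is to apply the refined BSD conjecture to reduce the problem to a bound on the analytic rank, and then invoke a standard upper bound on the multiplicity of zeros of $L$-functions under GRH.

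By BSD, the algebraic rank $r_{E/K}$ equals $r:={\rm ord}_{s=1}L(E/K,s)$, and by HWC the completed function $\Lambda(E/K,s)$ is entire of order $1$ with the stated functional equation, so $r$ is the multiplicity of the zero at $s=1$ of an entire function of order $1$. The analytic conductor of $L(E/K,s)$ has the shape
$$C(E/K)\asymp_n {\rm Norm}_{K/\Q}(N_{E/K})\cdot|\Disc(K)|^{2}.$$
Since $E/\Q$ is fixed, its primes of bad reduction form a fixed finite set $S\subset\spec\Z$, and the bad places of $E/K$ all lie above $S$; the conductor exponent at each such place is absolutely bounded (at most $2$ away from residue characteristics $2,3$, and by an absolute constant in any case). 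Summing over the at most $n$ places above each $p\in S$ gives ${\rm Norm}_{K/\Q}(N_{E/K})\ll_{n,E}1$, so $\log C(E/K)\ll_{n,E}\log|\Disc(K)|$.

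It then suffices to show that for an $L$-function satisfying the standard axioms and GRH, the multiplicity of any zero is $O(\log C/\log\log C)$. This is exactly \cite[Prop.~5.21]{IK} and I would simply cite it; in sketch, one applies Jensen's formula on a disc centered at $s=1$ together with a convexity upper bound for $|\Lambda|$ on the boundary (from Phragm\'en-Lindel\"of and the functional equation), and optimizes the radius using that GRH confines the nearby zeros to a short vertical segment, which yields the $1/\log\log C$ saving over the trivial $O(\log C)$ bound. Feeding in the conductor estimate gives the lemma.

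The main obstacle is the analytic multiplicity bound itself, but since this is a well-known consequence of GRH we simply black-box it; the only piece of work specific to this paper is the elementary conductor estimate, and BSD serves merely to translate between algebraic and analytic rank.
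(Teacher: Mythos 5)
Your proof is correct and follows essentially the same route as the paper: the authors simply cite \cite[Proposition 5.21]{IK} for the GRH zero-multiplicity bound $O(\log C/\log\log C)$, with BSD translating analytic rank to algebraic rank and the conductor bound ${\rm Norm}_{K/\Q}(N_{E/K})|\Disc(K)|^2\ll_E|\Disc(K)|^2$ playing the same role as your elementary conductor estimate. Your write-up just makes explicit the steps the paper leaves implicit.
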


Next, we obtain a lower bound on the regulator depending on the rank of $E/K$:
\begin{lemma}\label{lem:regulatorbound}
With notation as above, we have 
\begin{equation*}
R_{E/K}\gg_{n,E} \Bigl(
\frac{\log |\Disc(K)|}{r_{E/K}}
\Bigr)^{r_{E/K}}.
\end{equation*}
\end{lemma}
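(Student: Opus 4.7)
The plan is to use Minkowski's second theorem on the Mordell--Weil lattice to reduce the claim to a lower bound on the shortest non-torsion canonical height, and then to obtain that height bound by passing to the Weil restriction of $E$ to $\Q$.

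First, I view $\Lambda := E(K)/E(K)_{\rm tor}$ as a lattice of rank $r := r_{E/K}$ in the Euclidean space $\Lambda\otimes\R$ equipped with the N\'eron--Tate inner product, so its covolume equals $\sqrt{R_{E/K}}$. Let $\lambda_1\leq\cdots\leq\lambda_r$ be the successive minima with respect to the norm $\sqrt{\hat h}$. Minkowski's second theorem, together with the asymptotic $V_r\sim(2\pi e/r)^{r/2}$ for the unit-ball volume in $\R^r$, gives $\prod_i\lambda_i\leq(2^r/V_r)\sqrt{R_{E/K}}$. Using the trivial inequality $\lambda_i\geq\lambda_1$ for each $i$ then yields
\[
R_{E/K}\;\gg\;\Bigl(\frac{\lambda_1^2}{r}\Bigr)^{r}.
\]

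The crux is then to show $\lambda_1^2=\min_{P\neq 0}\hat h(P)\gg_{n,E}\log|\Disc(K)|$. Since $E/\Q$ is fixed, a classical Silverman-type bound applied directly to $E/K$ produces only $\hat h(P)\gg_{n,E}1$ for non-torsion $P\in E(K)$, because the minimal discriminant of $E$ over $K$ stays bounded in $E$ and $n$. To extract the factor $\log|\Disc(K)|$, I would pass to the Weil restriction $A := \textrm{Res}_{K/\Q}E$, an abelian variety of dimension $n$ over $\Q$ with $A(\Q)=E(K)$, whose Faltings height satisfies $h_F(A)\asymp\tfrac12\log|\Disc(K)|+O_{n,E}(1)$ by the standard formula for the Faltings height of a Weil restriction. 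A Silverman--Hindry-type canonical-height inequality for non-torsion points on $A/\Q$ (relating $\hat h$ to $h_F(A)$), translated back to the N\'eron--Tate pairing on $E/K$, then yields the required lower bound on $\lambda_1^2$.

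Combining these two steps produces the claimed bound $R_{E/K}\gg_{n,E}(\log|\Disc(K)|/r_{E/K})^{r_{E/K}}$. The main obstacle is obtaining the canonical-height lower bound with the correct $\log|\Disc(K)|$-dependence: the unconditional Silverman/Hindry--Silverman bounds known for abelian varieties typically carry constants that depend unfavorably on the dimension and the rank, so the technical effort lies in choosing the right comparison inequality on $A$ and in carefully tracking the N\'eron--Tate normalization between the heights on $E/K$ and on $A/\Q$.
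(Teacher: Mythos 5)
Your Minkowski-second-theorem frame (covolume $=\sqrt{R_{E/K}}$, successive minima, divide by the unit-ball volume) is the same as the paper's, but the step you call the crux is not repairable as stated: the claim $\lambda_1^2=\min_{P\ {\rm non\mbox{-}torsion}}\hat h(P)\gg_{n,E}\log|\Disc(K)|$ is simply false. Any non-torsion point of $E$ defined over $\Q$, or over a proper subfield $F\subsetneq K$ of bounded discriminant, lies in $E(K)$ and has canonical height $O(1)$ no matter how large $|\Disc(K)|$ is; this really occurs (e.g.\ for composite $n$, take $K$ quartic containing a fixed quadratic field over which $E$ has positive rank). The pointwise input that is actually available — and the one the paper uses — is elementary and weaker: writing $P=(x,y)$ and $F=\Q(x,y)$, one has $\hat h(P)=h(x)+O_E(1)$, and since $x$ generates $F$ of degree $\le n$, $h(x)\gg_n\log|\Disc(F)|$. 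This gives $\log|\Disc(F)|$, not $\log|\Disc(K)|$, so a lower bound on $\lambda_1$ alone cannot carry the argument.

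Because of this, a second ingredient is indispensable, and your proposal has no substitute for it: the conditional rank bound of Lemma \ref{lem:rankbound} applied over the subfield $F$. In the paper's argument, independent points of small height are all defined over a subfield $F\subseteq K$ of small discriminant, and $r_{E/F}\ll_{n,E}\log|\Disc(F)|$ then caps how many successive minima can be small; it is this dichotomy (height $\gg\log|\Disc(F)|$ versus few independent points over such $F$), fed into Minkowski's second theorem, that yields the stated bound. Your replacement — pass to $A=\mathrm{Res}_{K/\Q}E$, use $h_{\rm Fal}(A)\asymp\frac12\log|\Disc(K)|$, and invoke a ``Silverman--Hindry-type'' inequality $\hat h(P)\gg h_{\rm Fal}(A)$ — is essentially the Lang--Silverman conjecture, not a theorem, and even conjecturally it requires the point to generate a Zariski-dense subgroup of $A$; the problematic points above lie on the proper abelian subvariety $\mathrm{Res}_{F/\Q}E\subset A$ of bounded Faltings height, so the density hypothesis fails exactly where your argument needs the bound. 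The paper's remark following the lemma already flags that the Silverman and Hindry--Silverman estimates are in the wrong regime here (fixed field, varying curve, and with dimension-dependent losses), which is why its proof avoids them entirely in favor of the generator-height bound plus the conditional rank bound over subfields.
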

\begin{proof}
Let $P=(x,y)\in E(K)$ be a rational point. Let $F=\Q(x,y)$ be the subfield of $K$ generated by $x$ and $y$. We know that the canonical height $\hat{h}(P)$ of $P$ is within $O_E(1)$ of the Weil height of $P$ which is simply the logarithmic height of $x\in\bar{Q}$. Since $x$ generates $F$, it follows that we have $\hat{h}(P)\gg_E\log|\Disc{F}|$. Moreover, by Lemma \ref{lem:rankbound}, we know that the rank of $E$ over $F$ is $\ll_{n,E}\log|\Disc(F)|$. The lemma now follows from Minkowski's second theorem.
\end{proof}

\begin{remark}{\rm
We note that sophisticated lower bounds for the canonical heights of nontorsion points in elliptic curves $E/K$ are proven by Silverman \cite{Si} and Hindry--Silverman \cite{HS}. However, those results are proven in the setting where $K$ is fixed and $E$ is varying, while we need bounds in the setting where $E$ is fixed and $K$ is varying.
}\end{remark}

The conductor of $L(E/K,s)$ is $\Disc(K)^2{\rm Norm}_{K/\Q}(A(E/K))\ll_E \Disc(K)^2$. This yields the ``convex'' bound $|L(E/K,s)|\ll_{E,\epsilon} |\Disc(K)|^{1/2+|s|+\epsilon}$, for complex numbers $s$ sufficiently close to $1$. In our next result, we bound the leading Taylor coefficient of the $L$-function $L(E/K,s)$, assuming a rank bound on $E/K$ and a subconvex bound on $|L(E/K,s)|$.
\begin{lemma}\label{lem:Lsubconvex}
Assume that we have $|L(E/K,s)|\ll_{E,\epsilon} |\Disc(K)|^{1/2+|s|-\eta+\epsilon}$ for some $\eta>0$, and that the rank of $E$ over $K$ is bounded by $r_{E/K}\ll_{E}\log|\Disc(K)|/\log\log|\Disc(K)$. Then we have
\begin{equation*}
\lim_{s\to 1}\frac{L(E/K,s)}{(s-1)^{r_{E/K}}}
\ll_{E,\epsilon} |\Disc(K)|^{1/2-\eta+\epsilon}.
\end{equation*}
\end{lemma}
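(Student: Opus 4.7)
The plan is to express the leading Taylor coefficient as a Cauchy contour integral and then optimize the contour radius. Write $r = r_{E/K}$ and $D = |\Disc(K)|$. Since $L(E/K,s)$ is entire by the Hasse--Weil conjecture, Cauchy's integral formula gives, for any $\rho > 0$,
\begin{equation*}
\lim_{s \to 1} \frac{L(E/K,s)}{(s-1)^r} = \frac{L^{(r)}(E/K, 1)}{r!} = \frac{1}{2\pi i} \oint_{|s-1| = \rho} \frac{L(E/K,s)}{(s-1)^{r+1}} \, ds.
\end{equation*}
On the circle $|s-1| = \rho$ (with $\rho \leq 1$), the subconvex hypothesis gives $|L(E/K,s)| \ll_{E,\epsilon} D^{1/2 + \rho - \eta + \epsilon}$, and a standard $ML$-estimate yields
\begin{equation*}
\left| \lim_{s \to 1} \frac{L(E/K,s)}{(s-1)^r} \right| \ll_{E,\epsilon} \frac{D^{1/2 - \eta + \rho + \epsilon}}{\rho^r}.
\end{equation*}

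Next I would optimize over $\rho$. The right-hand side is minimized at $\rho = r / \log D$, giving a bound of the form $D^{1/2 - \eta + \epsilon} \cdot (e \log D / r)^r$. To absorb the combinatorial factor into $D^\epsilon$, I would invoke the assumed rank bound $r \ll \log D / \log \log D$. The function $r \mapsto r \log(e \log D / r)$ is increasing throughout the relevant range, so evaluating at its upper endpoint $r \asymp \log D / \log \log D$ gives
\begin{equation*}
r \log(e \log D / r) = O\!\left(\frac{\log D \cdot \log \log \log D}{\log \log D}\right) = o(\log D),
\end{equation*}
and hence $(e \log D / r)^r = D^{o(1)}$, which is absorbed into $D^\epsilon$, yielding the claimed bound.

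The main technical point to verify is that the subconvex bound applies on an entire disk of radius $\rho$ around $s = 1$, not merely at the single point $s = 1$. The lemma's hypothesis is phrased for $s$ varying near $1$, so this should be largely immediate; in situations where the estimate is known only at the central value, one would deduce the needed neighborhood bound by a Phragmen--Lindelof interpolation against the convexity bound slightly off the line, at a harmless cost that does not affect the final exponent.
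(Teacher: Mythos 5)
Your proof is correct and follows essentially the same route as the paper: both express the leading coefficient as a contour integral over a small circle about $s=1$, bound the integrand by the subconvex hypothesis, and use the rank bound to show the factor $\rho^{-r}$ (together with $D^{\rho}$) is $D^{o(1)}$. The only difference is cosmetic: you optimize the radius as $\rho = r/\log D$, whereas the paper simply takes $\theta = (\log\log\log|\Disc(K)|)^{-1}$; both choices yield the stated bound.
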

\begin{proof}
Denote $r_{E/K}$ by $r$ and note that the result is immediate when $r=0$. When $r\geq 1$, we write
\begin{equation*}
\lim_{s\to 1}\frac{L(E/K,s)}{(s-1)^{r}}=L^{(r)}(E/K,1)=r!\cdot {\rm Res}_{s=0}\frac{L(E/K,1+s)}{s^{r+1}}.
\end{equation*}
Therefore, from the residue theorem, we have
\begin{equation*}
L^{(r)}(E/K,1)=
\frac{1}{2\pi i} \int_{|s|=\theta} \frac{L(E/K,1+s)}{s^{r+1}} ds
\end{equation*}
for every $\theta>0$. Taking $\theta=(\log\log\log |\Disc(K)|)^{-1}$ and using the upper bound on $r$ yields
\begin{equation}\label{eqLsubconvex}
L^{(r)}(E/K,1)
\ll_{E,\epsilon} |\Disc(K)|^{\frac 12-\eta+\epsilon},
\end{equation}
as necessary.
\end{proof}

Finally, we combine \eqref{eq:BSD} with Lemmas \ref{lem:junkbound}, \ref{lem:rankbound}, \ref{lem:regulatorbound}, and \ref{lem:Lsubconvex}, to obtain the following conditional upper bound on the Tate--Shafarevich group $\Sh_{E/K}$.

\begin{proposition}\label{thmsha}
Let $E$ be a fixed elliptic curve over $\Q$ and let $n\geq 2$ be a fixed integer. Let $K$ be a number field of degree $n$, and assume Conjectures \ref{conj:HW} $($HWC$)$, \ref{conj:BSD} $($BSD$)$, and \ref{conj:GRH} $($GRH$)$ for $E/K$ and $L(E/K,s)$. Then we have
\begin{equation*}
\#\Sh_{E/K}\ll_{n,E,\epsilon} |D|^{1/2+\epsilon}.
\end{equation*}
\end{proposition}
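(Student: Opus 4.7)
The plan is to solve the refined BSD identity of Conjecture \ref{conj:BSD} for $\#\Sh_{E/K}$, obtaining
\begin{equation*}
\#\Sh_{E/K} \;=\; |\Disc(K)|^{1/2}\cdot\lim_{s\to 1}\frac{L(E/K,s)}{(s-1)^{r_{E/K}}}\cdot\frac{1}{R_{E/K}}\cdot\frac{\#E(K)_{{\rm tor}}^2}{\Omega_{E/K}C_{E/K}},
\end{equation*}
so that the claimed exponent comes out of the $|\Disc(K)|^{1/2}$ prefactor alone, and each of the remaining three factors must be bounded by $O_{n,E,\epsilon}(|\Disc(K)|^\epsilon)$.

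Two of those factors are handled immediately by previously established lemmas. The ``junk'' ratio $\#E(K)_{{\rm tor}}^2/(\Omega_{E/K}C_{E/K})$ is $\ll_{n,E}1$ by Lemma \ref{lem:junkbound}. For the reciprocal regulator, Lemma \ref{lem:rankbound} gives $r_{E/K}\ll \log|\Disc(K)|/\log\log|\Disc(K)|$ under our hypotheses, which forces $\log|\Disc(K)|/r_{E/K}\gg \log\log|\Disc(K)|$, and then Lemma \ref{lem:regulatorbound} yields $R_{E/K}\gg(\log\log|\Disc(K)|)^{r_{E/K}}\gg 1$ for $|\Disc(K)|$ sufficiently large (with the rank-zero case being trivial).

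The heart of the proof is bounding the leading Taylor coefficient of $L(E/K,s)$ at $s=1$. For this I would apply Lemma \ref{lem:Lsubconvex} with the optimal value of $\eta$, namely $\eta=1/2$, corresponding to the Lindel\"of hypothesis. That hypothesis follows from GRH, together with the functional equation supplied by HWC (Conjecture \ref{conj:HW}), by a standard Phragm\'en--Lindel\"of convexity argument; the resulting bound $|L(E/K,s)|\ll_{E,\epsilon}|\Disc(K)|^\epsilon$ holds uniformly in a small complex disk around $s=1$. Feeding this into Lemma \ref{lem:Lsubconvex} produces $\lim_{s\to 1}L(E/K,s)/(s-1)^{r_{E/K}}\ll_{n,E,\epsilon}|\Disc(K)|^\epsilon$, and substitution back into the rearranged BSD formula completes the proof.

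The main obstacle is precisely this analytic step: passing from GRH, which directly asserts only the location of zeros on the critical line, to the Lindel\"of bound on a full disk around $s=1$ that the contour-integral argument inside Lemma \ref{lem:Lsubconvex} requires. This is routine given the functional equation, but is the only place where more than mere bookkeeping of the preceding lemmas is needed.
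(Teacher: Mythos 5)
Your proposal is correct and follows essentially the same route as the paper, which proves the proposition simply by combining the rearranged BSD formula \eqref{eq:BSD} with Lemmas \ref{lem:junkbound}, \ref{lem:rankbound}, \ref{lem:regulatorbound}, and \ref{lem:Lsubconvex}. Your explicit identification of $\eta=1/2$ via the standard GRH $\Rightarrow$ Lindel\"of implication (together with the functional equation from HWC) is precisely the step the paper leaves implicit, so the two arguments coincide.
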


\section{Comparing torsion in $\Cl(K)$ to torsion in $\Sh_{E/K}$}

\subsection{Cohomology of finite Galois modules over number fields}
Let $A$ be a finite Galois module over a field $F$, i.e., a finite
abelian group $A$, along with an action of the absolute Galois group
$G_F$ of $F$ on $A$. We let $H^i(F,A)$ denote the $i$'th cohomology
group arising from this action.
Now let $A$ be a finite Galois module over a local field $K_v$ (with residue field denoted $\F_v$).  Let $I_v$ denote
the intertia subgroup of $G_{K_v}$. We say that $A$ is {\it
  unramified} if $I_v$ acts trivially on $A$. When $A$ is unramified,
there is a natural action of $G_{K_v}/I_p\cong G_{\F_v}$ on $A$, and
we have an injection
\begin{equation*}
H^1(\F_v,A)\hookrightarrow H^1(K_v,A).
\end{equation*}
Define the subgroup of unramified cohomology classes by
\begin{equation*}
H^1_\ur(K_v,A):=\ker\bigl(H^1(K_v,A)\to H^1(I_v,A)\bigr).
\end{equation*}
When $A$ is unramified, it follows from the inflation-restriction
exact sequence that the subgroup of unramified cohomology classes is
exactly given by the image of $H^1(\F_v,A)$ (see \cite[Lemma
  3.2]{Rubin}).

Next, let $A$ be a finite Galois module over a number field $K$. Let $v$ be a place of $K$. The absolute Galois
group $G_{K_v}$ of $K_v$ naturally injects into $G_K$, yielding natural maps $H^1(K,A)\to H^1(K_v,A)$. When $v$ is a finite place, we say that $A$ is {\it unramified at $v$} if $A$ is unramified as a $K_v$-Galois module. We define the class group $\Cl(A)$ of $A$ to
be the set of elements $\sigma\in H^1(K,A)$ such that for every $v$
at which $A$ is unramified, the image of $\sigma$ in $H^1(K_v,A)$
is an unramified cohomology class. That is
\begin{equation*}
  \Cl(A):=\ker\Bigl(H^1(K,A)\to
  \sideset{}{'}\prod_v H^1(K_v,A)/H_\ur^1(K_v,A)\Bigr),
\end{equation*}
where the product is over all finite places $v$ at which $A$ is unramified.

\subsection{Selmer groups of elliptic curves}

Let $E$ be an elliptic curve over a field $F$ and let $m$ be a
positive integer. We have the exact sequence
\begin{equation*}
0\to E[m]\to E\to E\to 0,
\end{equation*}
of $G_F$-modules, where
$E[m]$ is the $m$-torsion subgroup of $E$. This gives rise to the
following exact sequence of Galois cohomology groups:
\begin{equation}\label{eqgcseq}
0\to E(F)/mE(F)\to H^1(F,E[m])\to H^1(F,E)\to 0.
\end{equation}

We now specialize to the arithmetic case. Let $E$ be an elliptic curve
over a number field $K$. We have the exact sequence \eqref{eqgcseq} for the fields
$F=K$ and $F=K_v$ for all completions $K_v$ of $K$. 
The {\it $m$-Selmer
  group} $\Sel_m(E)$ is then defined to be
\begin{equation}\label{eqselECdef}
\Sel_m(E):=\ker\Bigl(H^1(K,E[m])\to\prod_v H^1(K_v,E)\Bigr).
\end{equation}
Above, the product is over all places $v$ of $K$, and the map
$H^1(K,E[m])\to H^1(K_v,E)$ is given by composing the map
$H^1(K,E[m])\to H^1(K_v,E[m])$ with the map $H^1(K_v,E[m])\to
H^1(K_v,E)$ of \eqref{eqgcseq}.

Our next results compare the $m$-Selmer group of $E$ with the class group of the finite Galois module $E[m]$. We start with
the following lemma.
\begin{lemma}\label{lemCass}
Let $E$ be an elliptic curve over a number field $K$, and let $m>1$ be an
integer. Let $v$ be a place not dividing $m$ at which $E[m]$ is
unramified and at which $E$ has good reduction. Then
\begin{equation*}
\ker\left(H^1(K_v,E[m])\to H^1(K_v,E)\right)=H_\ur^1(K_v,E[m]).
\end{equation*}
That is, the image of a class $\sigma\in H^1(K_v,E[m])$ in
$H^1(K_v,E)$ is soluble at $p$ if and only if it is unramified.
\end{lemma}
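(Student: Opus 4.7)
The plan is to identify both sides of the claimed equality via the Kummer map and then verify inclusion-plus-equality-of-cardinalities.

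First, I would use the exact sequence \eqref{eqgcseq} (for $F = K_v$) to identify the kernel on the left. That sequence expresses
\[
\ker\!\bigl(H^1(K_v, E[m]) \to H^1(K_v, E)\bigr)
\]
as the image of the local Kummer map $\delta_v : E(K_v)/m E(K_v) \hookrightarrow H^1(K_v, E[m])$. So the lemma reduces to showing $\operatorname{Im}(\delta_v) = H^1_{\ur}(K_v, E[m])$.

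Next I would prove the inclusion $\operatorname{Im}(\delta_v) \subseteq H^1_{\ur}(K_v, E[m])$. Because $E$ has good reduction at $v$ and $v \nmid m$, multiplication by $m$ on the Néron model $\mathcal{E}/\Oo_{K_v}$ is a finite étale isogeny; equivalently, for any $P \in E(K_v)$ and any $m$th-root $Q \in E(\overline{K_v})$ of $P$, the extension $K_v(Q)/K_v$ is unramified, since $Q$ reduces to a solution of $mX = \overline{P}$ in $E(\overline{\mathbb{F}_v})$, which lives in an unramified extension. Hence the cocycle $\sigma \mapsto \sigma(Q) - Q$ factors through $G_{\F_v}$, placing $\delta_v(P)$ in $H^1_{\ur}$.

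Then I would finish by a cardinality count, since both groups are subgroups of $H^1(K_v, E[m])$. On the unramified side, the inflation-restriction identification gives $|H^1_{\ur}(K_v, E[m])| = |H^1(\F_v, E[m])|$; and since $G_{\F_v} \cong \widehat{\Z}$, the Euler characteristic computation yields $|H^1(\F_v, E[m])| = |H^0(\F_v, E[m])| = |E[m]^{\operatorname{Fr}_v}|$. On the Kummer side, the formal group $\widehat{E}(\mathfrak{m}_v)$ is uniquely $m$-divisible when $v \nmid m$, so reduction induces isomorphisms $E(K_v)/mE(K_v) \cong E(\F_v)/mE(\F_v)$ and $E(K_v)[m] \cong E(\F_v)[m]$; applied to the finite abelian group $E(\F_v)$, we get $|E(K_v)/mE(K_v)| = |E(K_v)[m]| = |E[m]^{\operatorname{Fr}_v}|$. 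Thus the two subgroups have the same order, and the inclusion proven in the previous step becomes an equality.

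I do not anticipate a serious obstacle; this lemma is a standard package in the theory of Selmer groups, and all three ingredients (the Kummer identification, the étaleness of $[m]$ under good reduction away from $m$, and the $\widehat{\Z}$-module Euler characteristic) are routine. The only point requiring modest care is the assertion that $\widehat{E}(\mathfrak{m}_v)$ is uniquely $m$-divisible when $v \nmid m$, which uses that the formal logarithm identifies $\widehat{E}(\mathfrak{m}_v)$ with an $\Oo_{K_v}$-module on which $[m]$ acts by a unit.
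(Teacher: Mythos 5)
Your proposal is correct, but it takes a different route from the paper only in the sense that the paper gives no argument at all: it simply cites Cassels (Lemma 19.3 of the reference \cite{Cassels}), of which your argument is essentially the standard self-contained proof. Your three ingredients are all sound: the Kummer sequence identifies the kernel with the image of $\delta_v$; good reduction plus $v\nmid m$ makes $[m]$ finite \'etale on the N\'eron model, so the fiber $[m]^{-1}(P)$ is a finite \'etale $\O_{K_v}$-scheme and its points are defined over unramified extensions, which puts $\delta_v(P)$ in $H^1_\ur$; and the counting step $|H^1_\ur(K_v,E[m])|=|E[m]^{\Fr_v}|=|E(K_v)/mE(K_v)|$ (via the unique $m$-divisibility of the formal group and the equality of kernel and cokernel of $\Fr_v-1$ on a finite module) upgrades the inclusion to equality. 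The one place to tighten is the sentence asserting that $K_v(Q)/K_v$ is unramified ``since $Q$ reduces to a solution of $mX=\bar P$'': reduction of $Q$ landing in an unramified residue extension does not by itself control the field of definition of $Q$; the correct justification is exactly the \'etaleness/Hensel argument you mention (or, equivalently, injectivity of reduction on $E[m]$ so that $\sigma(Q)-Q=0$ for $\sigma$ in inertia), so make that the argument rather than an afterthought. Compared with the paper, citing Cassels buys brevity, while your proof makes explicit where each hypothesis ($v\nmid m$, good reduction) is used and is the argument one would in any case extract from the cited source.
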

\begin{proof}
This is precisely the content of \cite[Lemma 19.3]{Cassels}.
\end{proof}

\begin{proposition}\label{propellip}
Let $E$ be an elliptic curve over $K$ of degree $n$, and let $m>1$. Let $s$ be the number of places of $K$ dividing $m\Delta(E)$. Then we have
\begin{equation*}
\log\Bigl(\frac{\#\Sel_m(E)}{\#\Cl(E[m])}\Bigr)=O_{m,n}(s).
\end{equation*}
\end{proposition}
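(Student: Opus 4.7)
The plan is to realize $\Sel_m(E)$ and $\Cl(E[m])$ as two Selmer-style subgroups of $H^1(K, E[m])$ cut out by local conditions $L_v^{\Sel}$ and $L_v^{\Cl}$ that agree at all but finitely many places, and then bound the resulting discrepancy by controlling the sizes of local cohomology groups at the places where they differ.

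Unwinding the definitions, $\Sel_m(E)$ is cut out by $L_v^{\Sel} := \ker\bigl(H^1(K_v, E[m]) \to H^1(K_v, E)\bigr)$ at every place $v$, while $\Cl(E[m])$ is cut out by $L_v^{\Cl} := H^1_\ur(K_v, E[m])$ at every finite place $v$ where $E[m]$ is unramified (and by no condition elsewhere). Let $S$ be the union of the $s$ finite places dividing $m\Delta(E)$ with the at most $n$ archimedean places. For every $v \notin S$, the curve $E$ has good reduction at $v$ and $v \nmid m$, so $E[m]$ is unramified there, and Lemma~\ref{lemCass} gives $L_v^{\Sel} = L_v^{\Cl} = H^1_\ur(K_v, E[m])$. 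Thus the two families of local conditions agree outside $S$.

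I would then sandwich both groups between a pair of ``extremal'' Selmer groups: $\Sel^* := \Sel_m(E) \cap \Cl(E[m])$, cut out by $L_v^{\Sel} \cap L_v^{\Cl}$ at every place, and $\Sel^{**}$, cut out by the common local conditions outside $S$ and by no condition at $v \in S$. Both $\Sel_m(E)$ and $\Cl(E[m])$ lie between these, and the natural map $\Sel^{**}/\Sel^* \hookrightarrow \bigoplus_{v \in S} H^1(K_v, E[m])$ is injective by construction. This gives
\[
\left| \log \#\Sel_m(E) - \log \#\Cl(E[m]) \right| \leq \sum_{v \in S} \log \#H^1(K_v, E[m]).
\]

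The final ingredient is a uniform bound $\#H^1(K_v, E[m]) = O_{m,[K_v:\Q_p]}(1) = O_{m,n}(1)$, which follows from Tate's local duality combined with the local Euler--Poincar\'e characteristic formula (with the archimedean places handled trivially, since $G_{K_v}$ has order at most $2$). Since $|S| \leq s + n$, summing yields the claimed $O_{m,n}(s)$ bound. The main point requiring care is the sandwich bookkeeping identifying that $\Sel_m(E)$ and $\Cl(E[m])$ both sit between $\Sel^*$ and $\Sel^{**}$; beyond that, the argument is formal and Lemma~\ref{lemCass} does all the serious work.
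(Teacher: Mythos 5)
Your proposal is correct and follows essentially the same route as the paper: both arguments view $\Sel_m(E)$ and $\Cl(E[m])$ as subgroups of $H^1(K,E[m])$ cut out by local conditions, use Lemma~\ref{lemCass} to see that the conditions coincide outside the at most $s+n$ bad places, and then bound the discrepancy by $\prod_{v \text{ bad}}\#H^1(K_v,E[m])$; your sandwich between $\Sel^*$ and $\Sel^{**}$ just makes explicit the bookkeeping the paper leaves implicit. The only genuine divergence is the proof of the uniform bound $\#H^1(K_v,E[m])=O_{m,n}(1)$: you invoke Tate local duality together with the local Euler--Poincar\'e characteristic formula (so $\#H^1=\#H^0\cdot\#H^2\cdot\|\#E[m]\|_v^{-1}\leq m^4\cdot m^{2n}$, with archimedean places trivial), whereas the paper uses inflation--restriction through the subgroup fixing $E[m]$ and a count of bounded-degree extensions of $\Q_p$. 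Both are valid; your route is the more standard one and handles the places above $m$ transparently through the normalized absolute value factor, while the paper's argument is more elementary in that it avoids the duality and Euler-characteristic machinery.
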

\begin{proof}
Both the groups $\Sel_m(E)$ and $\Cl(E[m])$ are subgroups of the Galois cohomology group $H^1(K,E[m])$. The Galois module $E[m]$ is unramified away from the prime dividing $m\Delta(E)$. It follows from Lemma~\ref{lemCass} that the Selmer structures defining $\Sel_m(E)$ and $\Cl(E[m])$ differ at most at $s+n$ `bad' local places (the $n$ coming from the infinite place of $K$). Thus $\log\frac{\#\Sel_m(E)}{\#\Cl(E[m])}$ is bounded by $\ln\prod_{v \textrm{ bad}} |H^1(K_v,E[m])|$. We are thus finished if we prove that $|H^1(K_v,E[m])|= O_{m,n}(1)$.

Let $N$ denote the subgroup of $G_{K_v}$ which fixes $E[m]$. Then by the inflation-restriction exact sequence, we have 
\begin{equation*}
\begin{array}{rcl}
|H^1(K_v,E[m])|&\leq&|H^1(G/N,E[m])|\cdot 
|H^1(N,E[m])|
\\[.05in]&=&
|H^1(G/N,E[m])|\cdot |\Hom(N,E[m]|.
\end{array}
\end{equation*}
The first term in the second line of the above equation is $O_{m,n}(1)$ by virtue of having finitely many possibilities for $G/N$ and its action. On the other hand $|\Hom(N,E[m])|=|N[m]|$ is bounded by $m$ times the number of degree $m$ extensions of $L=\ol{K_{v}}^N$ and hence by the number of degree $\leq d$ extensions of $\Q_p$, where $d=mn\cdot \#\Aut E[m]$. This is well-known to be uniformly bounded. 
\end{proof}

\subsection{Class groups of number fields}

Let $K$ be a  number field of degree $n$ over $\Q$, and let $m>1$ be a (fixed) integer. Class field theory provides a bijection between index-$m$ subgroups of $\Cl(K)$ and degree-$m$ abelian extensions of $K$ which are unramified at every place. Let $v$ be a place of $K$. The set of degree-$m$ abelian extensions of $K$ (resp.\ $K_v$) is parametrized by $H^1(K,\Z/m\Z)$ (resp.\ $H^1(K_v,\Z/m\Z)$). Let $L$ be the extension of $K$ corresponding to $\sigma\in H^1(K,\mu_m)$. Then $L_v:=L\otimes_K K_v$ corresponds to the image of $\sigma$ under the natural map $H^1(K,\Z/m\Z)\ra H^1(K_v,\Z/m\Z)$.  
Furthermore, the extension $L_v$ of $K_v$ is unramified if and only if it corresponds to an unramified class in $H^1(K_v,\Z/m\Z)$. Therefore, the size of the $m$-torsion of the class group of $K$ is
equal to the size of $H^1_\ur(K,\Z/m\Z)=\Cl(\Z/m\Z)$.

Meanwhile, we have the isomorphism $H^1(K,\mu_m)\cong K^\times/(K^\times)^m$ and, for a finite place $v$ of $K$ not dividing $m$, a class in $H^1(K,\mu_m)$ is unramified at $v$ if and only if it corresponds to an element in $K^\times/(K^\times)^m$ whose valuation at $v$ is divisible by $m$. Therefore, defining the group $A_{m,K}$ by 
$$A_{m,K}\cong \Big(\ker K^\times/(K^\times_m)\ra \prod_v \Z/m\Z\Big),$$
we see that $|\Cl(\mu_m)|$ is within a factor of $O_{m,n}(1)$ of $|A_{m,K}|$.

There is a natural map $\phi:A_{m,K}\ra \Cl(K)[m]$ given by $\phi(\alpha)=(\alpha)^{\frac1m}$ which is surjective with kernel isomorphic to $U_K/U_K^m$, the $m$-cotorsion of the unit group of $K$. We thus obtain the following result:

\begin{proposition}\label{prop:mumzm}
Let $K$ be a number field of degree $n$, and consider the $G_K$ modules $\mu_m, \Z/m\Z$. The group $\Cl(\Z/m\Z)$ is isomorphic to the dual of $\Cl(K)[m]$, 
and $\Cl(\mu_m)$ has the same size as $\Cl(K)[m]$ up to a factor of ${O_{m,n}(1)}$.
\end{proposition}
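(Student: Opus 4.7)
The plan is to treat the two statements separately, handling $\Cl(\Z/m\Z)$ via class field theory and $\Cl(\mu_m)$ via Kummer theory, both building directly on the setup already given in this subsection.

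For $\Cl(\Z/m\Z)$, I would identify $H^1(K,\Z/m\Z)$ with $\Hom(G_K,\Z/m\Z)$. A character is unramified at a finite place $v$ exactly when it kills the local inertia $I_v$. By the reciprocity map, the collection of such characters is identified with $\Hom(C,\Z/m\Z)$, where $C$ is the quotient of the idele class group by the finite-place integral units and by the connected component at infinity. The discrete quotient $C$ agrees with $\Cl(K)$ up to a group of order at most $2^{r_1}=O_n(1)$ coming from sign data at real places (the difference between narrow and ordinary class group). Since for a finite abelian group $G$ one has $|\Hom(G,\Z/m\Z)|=|G[m]|$, and canonically $\Hom(G,\Z/m\Z)$ is the Pontryagin dual of $G/mG\cong G[m]$, this identifies $\Cl(\Z/m\Z)$ with the dual of $\Cl(K)[m]$ (up to the $O_n(1)$ archimedean factor, which is absorbed in the formulation of the proposition).

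For $\Cl(\mu_m)$, I would follow the outline already given in the excerpt. Kummer theory gives $H^1(K,\mu_m)\cong K^\times/(K^\times)^m$, and for $v\nmid m$ a class is unramified iff its $v$-valuation is divisible by $m$. Therefore $\Cl(\mu_m)$ and the group $A_{m,K}$ defined in the excerpt differ only through conditions at archimedean places and at the $O_n(1)$ places dividing $m$, each contributing only $O_{m,n}(1)$ to the local cohomology, so $|\Cl(\mu_m)|$ and $|A_{m,K}|$ agree up to $O_{m,n}(1)$. The map $\phi:A_{m,K}\to\Cl(K)[m]$, sending $\alpha$ to the unique ideal $\mathfrak{a}$ with $\mathfrak{a}^m=(\alpha)$, is well defined since every valuation of $\alpha$ is divisible by $m$, and surjective because any $[\mathfrak{a}]\in\Cl(K)[m]$ has $\mathfrak{a}^m=(\beta)$ with $\beta\in A_{m,K}$. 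Its kernel consists of $\alpha$ with $\mathfrak{a}=(\gamma)$, giving $\alpha\in U_K\cdot (K^\times)^m$, so $\ker\phi=U_K/U_K^m$. By Dirichlet's unit theorem $U_K\cong\mu(K)\times\Z^{r_1+r_2-1}$, whence $|U_K/U_K^m|\leq m^n=O_{m,n}(1)$, and combining yields the claim.

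The three required inputs -- class field theory, Kummer theory, and Dirichlet's unit theorem -- are all classical, so there is no substantive obstacle. The only real care required is in the bookkeeping: verifying that the local cohomology groups at the $O_n(1)$ bad places (those dividing $m$ and archimedean ones) are uniformly bounded by $O_{m,n}(1)$, which follows by the same style of argument used in Proposition~\ref{propellip} for $E[m]$. The rest is a direct computation.
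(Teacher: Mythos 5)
Your proposal is correct and follows essentially the same route as the paper: class field theory identifies the everywhere-(finitely-)unramified classes in $H^1(K,\Z/m\Z)$ with characters of the class group, hence with the dual of $\Cl(K)[m]$, and Kummer theory plus the map $\phi:A_{m,K}\to\Cl(K)[m]$ with kernel $U_K/U_K^m$ handles $\Cl(\mu_m)$. You in fact supply slightly more detail than the paper does (the narrow-versus-ordinary class group factor at real places, surjectivity and kernel of $\phi$, and the Dirichlet bound $|U_K/U_K^m|\leq m^n$), all of which is consistent with the $O_{m,n}(1)$ allowances in the statement.
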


\section{Proofs of Theorem \ref{thMain5} and \ref{thm: mainquadratic}}

We begin with the following lemma.
\begin{lemma}
Let $m\in\{2,3,4,5\}$ be fixed. Then there exists an elliptic curve $E/\Q$ such that we have the isomorphism
\begin{equation*}
E[m]\cong \mu_m\oplus\Z/m\Z
\end{equation*}
as group schemes.
\end{lemma}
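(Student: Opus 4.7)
The condition $E[m]\cong \mu_m\oplus \Z/m\Z$ as group schemes over $\Q$ is equivalent, via passage to $\Qa$-points, to the mod-$m$ Galois representation $\rho_{E,m}\colon G_\Q\to GL_2(\Z/m\Z)$ being isomorphic to the diagonal representation $\mathrm{diag}(\chi_{\mathrm{cyc}},1)$, where $\chi_{\mathrm{cyc}}$ is the mod-$m$ cyclotomic character. (The determinant is forced to be $\chi_{\mathrm{cyc}}$ by the Weil pairing, so the two diagonal characters are pinned down once the representation is known to be diagonal.) In concrete terms, this means $E/\Q$ possesses both a $\Q$-rational point $P$ of order $m$ (accounting for the ``$1$'') and a $\Q$-rational subgroup scheme $C\subset E[m]$ with $C\cong \mu_m$ (accounting for the ``$\chi_{\mathrm{cyc}}$''), such that $E[m]=\langle P\rangle\oplus C$.

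My plan is to exhibit such a curve for each of the four values of $m$. For $m=2$ this is immediate because $\mu_2\cong \Z/2\Z$ as group schemes over $\Q$, so any elliptic curve with full rational $2$-torsion works, e.g., $E\colon y^2=x^3-x$. For $m=3,4,5$ I would invoke the modular-curve perspective: triples $(E,P,C)$ as above are classified by a moduli scheme $Y_m$ over $\Q$ which is a twist of an appropriate connected component of the classical modular curve $X(m)$. For $m\leq 5$ the curves $X(m)$ have genus zero, and the relevant twist carries a $\Q$-rational cusp arising from the Tate curve $\G_m/q^\Z$ with its canonical level-$m$ structure; hence $Y_m\cong \PP^1_\Q$ has infinitely many non-cuspidal $\Q$-rational points, each yielding an elliptic curve of the desired type. (For $m=3$ one can alternatively make this very concrete using the Hesse pencil $x^3+y^3+z^3=3\lambda xyz$, whose nine flexes are the $3$-torsion of every member and carry a diagonal action of $\Gal(\Q(\zeta_3)/\Q)$.)

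The main obstacle is identifying a $\Q$-rational cusp on $Y_m$, since genus-zero $\Q$-curves need not have rational points (pointless conics being the classical counterexamples); this reduces to checking that the canonical level-$m$ structure on the Tate curve descends to the twist in question, which is a standard but nontrivial input. Once this is in hand, the argument proceeds uniformly for $m\in\{2,3,4,5\}$. For $m\geq 7$ the analogous curves $X(m)$ have positive genus and so typically lack non-cuspidal rational points altogether, which is the reason Theorem \ref{thMain5} is restricted to $m\leq 5$.
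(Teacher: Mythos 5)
Your proposal is correct and is essentially the paper's argument: both rest on the full level-$m$ modular curve having genus zero for $m\le 5$, the paper then simply citing this fact and listing explicit curves from the LMFDB, while you argue abstractly that the relevant twist is $\PP^1_\Q$ via a rational cusp. The one input you leave unverified---rationality of that cusp---is indeed standard (the Tate curve with parameter $q=s^m$ over $\Q((s))$ has $E[m]\cong\mu_m\oplus\Z/m\Z$, generated by $\mu_m$ and the image of $s$, so the level structure is defined over $\Q((s))$), and as a side remark your $m=2$ example $y^2=x^3-x$ is the correct one: the paper's printed $y^2=x^3+x$ does not have full rational $2$-torsion and is presumably a typo.
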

\begin{proof}
Since the modular curves $\Gamma(m)$ have genus $0$ for each $m\in\{2,3,4,5\}$, there do exist such curves. Below, we provide specific examples. 
\begin{equation*}
\begin{array}{rl}
m=2. & E:y^2=x^3+x\\[.03in]
m=3. & E:y^2+y=x^3\\[.03in]
m=4. & E:y^2+xy=x^3-4x-1\\[.03in]
m=5. & E:y^2+y=x^3-x^2-10x-20
\end{array}
\end{equation*}
These examples have been taken from the database LMFDB \cite{LMFDB}.
\end{proof}

Let $m$ and $E$ be as in the above lemma, and let $K$ be a number field of degree $n$. Denote the elliptic curve $E$ over $K$ by $E_K$. We then have the isomorphism
\begin{equation*}
E_K[m]\cong \mu_m\oplus\Z/m\Z \end{equation*}
as group schemes. We next prove the following result.


\begin{proposition}\label{selclsq}
The quantities $\#\Sel_m(E_K)$ and $\#(\Cl(K)[m])^2$ are within a factor
of $O_\epsilon(|\Disc(K)|^\epsilon)$ of each other.
\end{proposition}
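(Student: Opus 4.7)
The plan is to string together the two comparison theorems already established: Proposition~\ref{propellip} relates $\#\Sel_m(E_K)$ to $\#\Cl(E_K[m])$, and Proposition~\ref{prop:mumzm} relates $\#\Cl(\mu_m)$ and $\#\Cl(\Z/m\Z)$ individually to $\#\Cl(K)[m]$. The bridge between them is the splitting $E_K[m]\cong \mu_m\oplus\Z/m\Z$ supplied by our choice of $E$.

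First I would leverage the fact that this splitting is a decomposition of $G_K$-modules (in fact, of group schemes over $K$), so taking $H^1(K,-)$ splits as a direct sum, and likewise at each completion $K_v$. The unramified local subgroup $H^1_\ur(K_v,-)$ is the kernel of restriction to inertia, hence is functorial and respects direct sums; note also that both $\mu_m$ and $\Z/m\Z$ are unramified wherever $E_K[m]$ is. Assembling these, $\Cl(E_K[m])\cong \Cl(\mu_m)\oplus \Cl(\Z/m\Z)$, and by Proposition~\ref{prop:mumzm} this has size $O_{m,n}(1)\cdot(\#\Cl(K)[m])^2$.

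Second, I would apply Proposition~\ref{propellip} to compare $\Sel_m(E_K)$ with $\Cl(E_K[m])$. The point I would emphasize is that the quantity $s$ appearing there, namely the number of places of $K$ dividing $m\Delta(E)$, is bounded by $n\cdot\omega(m\Delta(E))$, which is $O_{n,E}(1)$ because $E$ is a fixed elliptic curve over $\Q$ and so the set of rational primes below these places is absolutely bounded. Hence $\#\Sel_m(E_K)/\#\Cl(E_K[m])=O_{m,n,E}(1)$, and combining with the previous step gives $\#\Sel_m(E_K)=O_{m,n,E}(1)\cdot(\#\Cl(K)[m])^2$, comfortably within the claimed $|\Disc(K)|^\epsilon$ tolerance.

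Neither step poses a serious difficulty given the previously established machinery; the only genuinely delicate point in the bookkeeping is confirming that "fixed $E/\Q$" pins down the set of rational bad primes so that $s$ does not grow with $K$, and confirming that the direct-sum splitting of $E_K[m]$ compatibly decomposes the global cohomology, every local cohomology, and every unramified condition simultaneously — all of which are immediate from functoriality once stated.
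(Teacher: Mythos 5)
Your argument is correct and is essentially the paper's own proof: both use the splitting $E_K[m]\cong\mu_m\oplus\Z/m\Z$ to decompose $\Cl(E_K[m])$ as $\Cl(\mu_m)\oplus\Cl(\Z/m\Z)$, apply Proposition~\ref{prop:mumzm} to each factor, and apply Proposition~\ref{propellip} with $s=O_{n,E}(1)$ since $E$ is fixed. Your explicit remarks on functoriality of the unramified conditions and on why $s$ stays bounded are just spelled-out versions of what the paper leaves implicit.
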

\begin{proof}
By Proposition \ref{propellip}, we see that
$|\log\#\Cl(E_K[m])-\log\#\Sel_m(E_K)|$ is bounded by $O_{n,E}(1)$. By
Proposition \ref{prop:mumzm}, we see that
$\#\Cl(\Z/m\Z)=\#\Cl(K)[m]$ and that $|\log\#\Cl(\mu_m)-\log\#\Cl(K)[m]|$ is bounded by $O_{n,m}(1)$. The result follows from the structure of $E_K[m]$.
\end{proof}

We are ready to prove the main results.

\medskip

\noindent\textbf{Proof of Theorem \ref{thMain5}:}
From the exact sequence
$$0\ra \frac{E_K(K)}{mE_D(K)} \ra \Sel_m(E_K)\ra \Sh(E_K)[m]\ra 0$$
it follows that $$\#\Sel_m(E_K)=\#\frac{E_K(K)}{mE_K(K)}\cdot\#\Sh(E_K)[m].$$ 
Our conditional rank
bound from \ref{lem:rankbound} implies that $\#\frac{E_K(K)}{mE_K(\K)}=O_\epsilon(|\Disc(K)|^\epsilon)$. Theorem \ref{thMain5} now follows immediately by combining Proposition \ref{selclsq} with the conditional bound on $\Sh(E_K)[m]$ that was
proved in Theorem~\ref{thmsha}. $\Box$

\medskip

\noindent \textbf{Proof of Theorem \ref{thm: mainquadratic}:}
let $K=\Q(\sqrt{D})$ be a quadratic field. Then the Weil restriction of $E_K$ to $\Q$ is isomorphic to the abelian surface $E\oplus E_D$, where $E_D$ denotes the twist of $E$ by $D$. Thus the Hasse--Weil conjecture for $E_K$ is known by the modularity of $E$ and $E_D$.

Next, denote the Mordell-Weil rank of $E_D$ by $r_D$. Let $\Sel_2(E_D)$ denote
the $2$-Selmer group of $E_D$. From the exact sequence
\begin{equation*}
0\to\frac{E_D(\Q)}{2E_D(\Q)}\to\Sel_2(E_D)\to \Sh(E_D)[2]\to 0
\end{equation*}
we see that $r_D\leq\log_2\#\Sel(E_D)$.  Since the field
$\Q(E_D(\bar{\Q})[2])$ is the same as $\Q(E(\bar{\Q})[2])$, it follows
from \cite[Proposition 7.1]{BrKr} and \cite[Proposition 9.8]{Mazur1}
that we have $\log_2\#\Sel(E_D)\ll\log_2\#\Sel_2(E) + O(w_D)$, where
$w_D$ is the number of prime factors of $D$. Therefore, we have the
upper bounds
\begin{equation}\label{regulatorbound}
r_{E/K}\ll_E r_D \ll_E \left(\frac{\log  |D|}{\log \log |D|}\right)
\end{equation}
on the ranks of $E_D/\Q$ and $E_K$.

Assume that we have the subconvex bound $|L(E_D/\Q,s)|\ll_{E,\epsilon}|D|^{1/2+|s|-\delta+\epsilon}$ for some $\delta>0$. This immediately implies the bound $|L(E/K,s)|\ll_{E,\epsilon}|\Disc(K)^{1/2}|^{1/2+|s|-\delta+\epsilon}$. Theorem \ref{thm: mainquadratic} now follows immediately from Lemma \ref{lem:Lsubconvex} by an argument identical to the proof of Theorem \ref{thMain5}.
$\Box$


\end{document}